\DeclareMathOperator{\LipSh}{LipSh}
\DeclareMathOperator{\Int}{Int}
\DeclareMathOperator{\Rep}{Rep}
\DeclareMathOperator{\Per}{Per}
\DeclareMathOperator{\Ker}{Ker}
\DeclareMathOperator{\OrSh}{OrSh}
\DeclareMathOperator{\dist}{dist}
\DeclareMathOperator{\diag}{diag}
\begin{document}

\righthyphenmin = 2

\newcommand{\RR}{\mbox{${\bf R}$}}
\newcommand{\Cc}{\mbox{${\bf C}$}}
\newcommand{\Ff}{\mbox{${\cal F}$}}
\newcommand{\Ws}{\mbox{$W^s$}}
\newcommand{\Wu}{\mbox{$W^u$}}
\newcommand{\Cone}{\mbox{$\Cc^1$}}

\newcommand{\sref}[1]{(\ref{#1})}

\newtheorem{thm}{Теорема }
\newtheorem{lem}{Лемма }
\theoremstyle{definition}
\newtheorem{defin}{Определение }

\author{C. Б. Тихомиров}
\title{Внутренности множеств векторных полей со свойствами отслеживания,
соответствующими некоторым классам репараметризаций}
\date{}

\maketitle

\section{Abstract}

Изучается структура $\Cone$-внутренности множеств векторных полей,
обладающих различными видами свойства отслеживания. Показано, что
для случая липшицевого свойства отслеживания она совпадает с
множеством структурно устойчивых систем. В случае, если размерность
многообразия не превышает 3, аналогичный результат верен для
ориентированного свойства отслеживания.

\section{Введение}

Задача об отслеживании псевдотраекторий связана со следующим
вопросом: при каких условиях для любой псевдотраектории динамической
системы можно найти близкую к ней траекторию? Изучение данной задачи
было начато Д. В. Аносовым \cite{Ano} и P. Боуэном \cite{Bow}.
Современное состояние теории отслеживания в значительной степени
отражено в монографиях \cite{Pil, Palm}.

Отметим, что основное отличие задачи об отслеживании для потоков от
аналогичной задачи для дискретных динамических систем, порождаемых
диффеоморфизмами, состоит в репараметризации отслеживающих
траекторий.

Цель данной статьи -- описать структуру $\Cone$-внутренности
множеств векторных полей, обладающих теми или иными свойствами
отслеживания псевдотраекторий.

\section{Основные обозначения и результаты}

Пусть $M$ -- гладкое $n$-мерное замкнутое (т.е. компактное без края)
многообразие с римановой метрикой $\dist$. Обозначим через $\Ff(M)$
пространство гладких векторных полей на $M$ с $\Cone$-топологией.
Для векторного поля $X \in \Ff(M)$ будем обозначать через $\phi(t,
x)$ такую траекторию поля $X$, что $ \phi(0, x) = x$.
\begin{defin}
Пусть $d>0$. Будем называть \textit{$d$-псевдотраекторией} поля $X$
такое отображение $g: \RR \to M, $ что $ \dist(g(t + \tau), \phi(t,
g(\tau))) < d $ для $|t|<1$, $\tau \in \RR$.
\end{defin}

Введем понятие свойства отслеживания для потоков. Важнейшую роль в
свойстве отслеживания для потоков играют репараметризации.
\begin{defin} Назовем \textit{репараметризацией} такой возрастающий
гомеоморфизм $h: \RR \to \RR, $ что $h(0) = 0$. Для $a>0$ обозначим
через $\Rep(a)$ множество репараметризаций, удовлетворяющих
неравенству
$$
\left| \frac{h(t_1) - h(t_2)}{t_1 - t_2} - 1 \right| \leq a \quad
\mbox{для} \quad t_1, t_2 \in \RR, \quad t_1 \ne t_2.
$$
\end{defin}

\begin{defin}
Будем говорить, что поток $\phi$ обладает \textit{ориентированным
свойством отслеживания}, если по любому $\varepsilon > 0$ найдется
такое $d>0$, что для любой $d$-псевдотраектории $g$ можно указать
такие точку $p$ и репараметризацию $h$, что выполнено неравенство
\begin{equation}\notag
\dist(\phi(h(t), p), g(t)) < \varepsilon, \quad t \in \RR.
\end{equation}
\end{defin}

\begin{defin}
Будем говорить, что поток $\phi$ обладает \textit{липшицевым
свойством отслеживания}, если существуют $L_0, D_0 >0$ со следующим
свойством: для любых $d< D_0$ и $d$-псевдотраектории $g$ можно
указать такие точку $p$ и репараметризацию $h \in \Rep(L_0 d)$, что
выполнено неравенство
\begin{equation}\notag
\dist(\phi(h(t), p), g(t)) < L_0d, \quad t \in \RR.
\end{equation}
\end{defin}

Будем обозначать через $\OrSh$ и $\LipSh$ множества векторных полей,
обладающих ориентированным и липшицевым свойствами отслеживания,
соответственно. Кроме того, будем обозначать через $S$ множество
структурно устойчивых векторных полей, через  $T$ множество
векторных полей, у которых все точки покоя и замкнутые траектории
гиперболичны, и через $KS$ множество полей Купки-Смейла
\cite{PilRus}. Ясно, что $ \LipSh \subset \OrSh$.

Для любого множества $A \subset \Ff(M)$ будем через $\Int^1(A)$
обозначать \hbox{$\Cone$-внутренность} множества $A$. Для векторного
поля $X$ обозначим через $\Per(X)$ множество точек покоя и замкнутых
траекторий поля $X$. Для всякой гиперболической траектории $p \in
\Per(X)$ будем обозначать через $\Ws(p)$ и $\Wu(p)$ ее устойчивое и
неустойчивое многообразие, соответственно.

В \cite{Pil7}  показано, что $S \subset \LipSh.$ Так как множество
$S$ является \hbox{$\Cone$-открытым}, то $ S \subset
\Int^1(\LipSh)$. Основные результаты данной статьи таковы:
\begin{thm}\label{Lip}
$ S = \Int^1(\LipSh).$
\end{thm}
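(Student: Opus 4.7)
The plan is to establish the nontrivial inclusion $\Int^1(\LipSh)\subset S$. Fix $X\in\Int^1(\LipSh)$. By the classical characterization of structural stability for $\Cone$-flows (Robinson, together with the Hayashi--Ma\~n\'e type derivation of Axiom~A from robust hyperbolic behaviour of critical elements), it is enough to show that $X$ satisfies Axiom~A and strong transversality. I split this into three steps: (i) every rest point and every closed trajectory of $X$ is hyperbolic; (ii) the nonwandering set is hyperbolic and has no cycles; (iii) for any two critical elements, $\Ws$ and $\Wu$ intersect transversally.

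The core of the argument is step (i). Suppose, toward contradiction, that some closed trajectory $\gamma$ of $X$ has a Poincar\'e return map with an eigenvalue on the unit circle. Using a Franks-type $\Cone$-perturbation supported in a small tubular neighbourhood of $\gamma$ and keeping the perturbed field $Y$ inside $\Int^1(\LipSh)$, I would bring the linearized Poincar\'e map along the (perturbed) closed orbit into a normal form with a prescribed non-hyperbolic eigenvector transverse to the flow. For each small $d>0$ I then construct a $d$-pseudotrajectory $g$ that wraps the closed orbit $N\sim 1/d$ times, shifted at every lap by $\sim d$ along the non-hyperbolic eigenspace; the accumulated transverse drift is of order $1$, uniformly in $d$. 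Any true trajectory $\phi(h(t),p)$ of $Y$ that attempts to $L_0 d$-shadow $g$ must either start at $p$ of distance $\sim 1$ from $\gamma$, contradicting $\dist(\phi(h(t),p),g(t))<L_0 d$, or rely on the reparametrization $h$ to absorb the drift; but $h\in\Rep(L_0 d)$ allows only near-isometric time rescalings \emph{along} the flow, and cannot compensate drift strictly \emph{transverse} to it. Non-hyperbolic rest points are handled by the analogous, simpler construction, without reparametrization.

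With (i) in force throughout the open set $\Int^1(\LipSh)$, steps (ii) and (iii) follow from standard $\Cone$-arguments: robust hyperbolicity of all critical elements, combined with Pugh's closing lemma and Hayashi's connecting lemma, yields hyperbolicity of the nonwandering set, absence of cycles and strong transversality. Alternatively, one can defeat (iii) directly: a tangential intersection of $\Ws(p)$ and $\Wu(q)$ survives small $\Cone$-perturbations, and traversing the corresponding heteroclinic chain many times yields a pseudo-trajectory whose shadowing error grows superlinearly in $d$, contradicting $Y\in\LipSh$ with uniform constants $L_0, D_0$.

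The main obstacle will be step (i). In contrast with the diffeomorphism case of \cite{PilRus}, the admissible reparametrization $h\in\Rep(L_0 d)$ already absorbs small time shifts along $\gamma$, so the defeating pseudo-trajectory must accumulate drift strictly transversally to the flow and at a rate no nearly-isometric $h$ can match. Producing such a drift uniformly over all possible unit-modulus Poincar\'e eigenvalues (real or complex, simple or Jordan), while keeping the Franks-type perturbation $Y$ inside $\Int^1(\LipSh)$ and tracking the precise interaction between transverse drift and the reparametrization freedom, is the delicate technical heart of the proof.
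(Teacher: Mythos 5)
Your overall strategy (put $X\in\Int^1(\LipSh)$ into $S$ by establishing hyperbolicity of critical elements plus transversality) points in the right direction, but there are two genuine gaps. First, your step (ii) --- passing from hyperbolic critical elements to Axiom~A, no cycles and strong transversality by ``standard arguments'' with the closing and connecting lemmas --- is precisely the hard global part of stability theory, and you give no argument for it. The paper avoids this entirely by invoking Gan's theorem $\Int^1(KS)=S$: once one knows that every field in the $\Cone$-interior has only hyperbolic critical elements (Lemma~\ref{Hyp}, a modification of \cite{Pil2}) \emph{and} that stable and unstable manifolds of critical elements meet transversally (Lemma~\ref{Rest}), the field lies in $\Int^1(KS)$ and hence in $S$. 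Without such a reduction, your step (ii) is an unproven claim of roughly the same depth as the theorem itself. Relatedly, you locate the ``technical heart'' in step (i); in the paper that part is essentially quoted from earlier work, and the new content is the transversality lemma.

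Second, neither of your two suggestions for step (iii) works as stated. A tangency of $\Ws(p)$ and $\Wu(q)$ does \emph{not} in general survive small $\Cone$-perturbations (that is exactly why Kupka--Smale is generic); the paper instead constructs, following \cite{Pil2}, \cite{Pil3}, a specific perturbed field, linear near $p$ and $q$, that \emph{retains} a non-transverse intersection and still lies in $\Int^1(\LipSh)$. Nor is the contradiction a ``superlinear growth of the shadowing error'': it is a sign contradiction. Using non-transversality one chooses a functional $\Pi_p^{e_p}$ annihilating $\Pi_p^{(i)}K\tilde{\Sigma}_q$, builds a pseudotrajectory with jumps $de_q$ and $de_p$ arranged so that $\Pi_p^{e_p}K\Pi_q e_q<0$ while $\Pi_p^{e_p}e_p=1$, and shows that any point Lipschitz-shadowing it must satisfy both $\Pi_p^{e_p}K\omega_q<0$ and $\Pi_p^{e_p}K\omega_q=\Pi_p^{e_p}\omega_p>0$. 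The delicate point, absent from your proposal at the step where it is needed, is that complex eigenvalues rotate these directions, and one must prove (Lemmas~\ref{Spiral} and~\ref{Dim1Spiral}) that a reparametrization $h\in\Rep(L_0d)$ together with an $L_0d$-shadowing bound cannot change the argument in a spiral eigenplane by more than $\pi/4$ over the relevant time interval. This control of phase drift under admissible reparametrizations is what makes the flow case genuinely harder than the diffeomorphism case, and it is the content your sketch would still have to supply.
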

\begin{thm}\label{Dim3}
Если  $\dim M \leq 3$, то выполнено равенство $ S = \Int^1(\OrSh)$.
\end{thm}

\section{Доказательство теоремы \ref{Lip}}

Модифицируя технику, примененную в \cite{Pil2} для случая
диффеоморфизмов, легко доказать следующее утверждение:
\begin{lem}\label{Hyp}
$\Int^1(\OrSh) \subset T$.
\end{lem}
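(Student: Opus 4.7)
The strategy I would follow is to argue by contraposition, mirroring the approach of \cite{Pil2} for diffeomorphisms: I assume that $X \in \Ff(M)$ has a non-hyperbolic rest point or a non-hyperbolic closed trajectory and exhibit a flow $Y$, arbitrarily $\Cone$-close to $X$, that fails to lie in $\OrSh$. This shows $X \notin \Int^1(\OrSh)$ and proves the lemma by contraposition.

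The first step is a Franks-type perturbation lemma for vector fields, used to reduce the situation to an explicit linear model. If the non-hyperbolic object is a rest point $p$, I would perturb $X$ inside a small ball around $p$ so that in local coordinates $Y$ coincides with its linear part $A = DY(p)$ on a neighborhood $U$ of $p$, with $A$ having exactly one eigenvalue (or a complex conjugate pair) on the imaginary axis and all other eigenvalues hyperbolic. If the non-hyperbolic object is a closed trajectory $\gamma$, I would straighten the flow in a tubular neighborhood of $\gamma$ and arrange that the Poincar\'e return map on a transverse disk is linear with an eigenvalue of modulus one ($+1$, $-1$, or $e^{\pm i\theta}$); passing to the Poincar\'e section reduces the periodic case to a fixed-point problem for a diffeomorphism and lets both cases be treated uniformly.

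The main construction is then to build, for the perturbed flow $Y$, an explicit $d$-pseudo-orbit $g$ whose trace $g(\RR)$ is not Hausdorff-approximable by any single $Y$-trajectory. In the $e^{\pm i\theta}$ case I would let $g$ move, with small jumps at integer times, along a circle of radius $r$ lying in the center plane of $A$; since on $U$ the orbits of $Y$ are dictated by the linear part, no $Y$-trajectory has its closure within $\varepsilon$ of this circle once $r$ is chosen in a suitable intermediate range and $\varepsilon$ is small compared with $r$. The cases of eigenvalues $\pm 1$ or a zero eigenvalue are analogous, built from short segments of the center direction, which under the linear dynamics are either stretched, contracted, or flipped, none of which is compatible with the shape of $g(\RR)$. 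The principal obstacle, by comparison with pointwise shadowing, is precisely that orbital shadowing is a set-wise property: one must preclude every $Y$-orbit whose closure is Hausdorff-close to $g(\RR)$, not merely a specific tracking trajectory. This is exactly the role of the preliminary linearization on $U$: it renders the orbit structure of $Y$ transparent, so the required incompatibility becomes a direct computation, after which the contradiction with $Y \in \OrSh$ closes the argument.
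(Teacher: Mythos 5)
The paper gives no proof of this lemma at all: it simply asserts that the statement follows by the methods developed in \cite{Pil2} for diffeomorphisms. Your overall strategy --- contraposition, a Franks-type perturbation to linearize near the non-hyperbolic rest point or near the Poincar\'e section of the non-hyperbolic closed trajectory, and then an explicit pseudotrajectory that cannot be orbitally shadowed --- is exactly the intended route, so at the level of architecture there is nothing to object to.

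There is, however, a genuine gap in your central construction for the rotational case. If $p$ is a rest point and $DY(p)$ has a purely imaginary pair $\pm ib$, the linearized flow restricted to the center plane is a rigid rotation, so \emph{every circle centered at the origin in that plane is itself the trace of a genuine periodic orbit of $Y$ near $p$}. Your pseudotrajectory running along the circle of radius $r$ is therefore orbitally shadowed perfectly (in the Hausdorff, set-wise sense) by the true periodic orbit on that very circle, and no contradiction with $Y \in \OrSh$ arises; the claim that ``no $Y$-trajectory has its closure within $\varepsilon$ of this circle'' is false. The same trap occurs for a closed trajectory whose return map has an eigenvalue $e^{\pm i\theta}$ with $\theta/\pi$ irrational: a single orbit on the invariant torus can be dense there, and its closure Hausdorff-approximates a pseudotrajectory winding around the invariant circle of the section. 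The standard repair, and the one needed to make the argument close, is to build a pseudotrajectory that drifts \emph{transversally} to the foliation by invariant circles --- e.g.\ one that spirals slowly from radius $r_1$ to radius $r_2$ --- so that the closure of its trace contains a two-dimensional annulus in the center direction; since the closure of any single orbit of the linearized flow meets the center plane in at most one circle (plus a spiral accumulating on it), it cannot be $\varepsilon$-dense in an annulus of width exceeding $2\varepsilon$, and orbital shadowing genuinely fails. Your eigenvalue-$1$ and eigenvalue-$0$ cases (a segment of rest points, resp.\ a cylinder of closed orbits, versus single-point or single-circle orbit closures) are fine as sketched.
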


Ган доказал в \cite{Gan}, что $\Int^1(KS) = S$. Таким образом, для
доказательства теоремы \ref{Lip} осталось доказать следующее
утверждение:
\begin{lem}\label{Rest}
Пусть $X \in \Int^1(\LipSh)$ и $p, q \in \Per(X)$. Если $ r \in
\Wu(q) \cap \Ws(p)$, то $r$-- точка трансверсального пересечения
$\Wu(q)$ и $\Ws(p)$.
\end{lem}
\begin{proof}

Мы приведем доказательство этой леммы для наиболее трудного случая,
в котором $p$ и $q$ -- точки покоя. В остальных случаях, используя
методы, описанные в \cite{Pil2}, \cite{Pil3}, можно доказать
аналогичное утверждение для ориентированного свойства отслеживания:
\begin{lem}\label{OrSP}
Пусть $X \in \Int^1(\OrSh)$. Пусть $\gamma_1$ -- замкнутая
траектория поля $X$, а $\gamma_2 \in \Per(X)$. Пусть $r_0 \in
\Ws(\gamma_1) \cap \Wu(\gamma_2)$. Тогда $r_0$ -- точка
трансверсального пересечения $\Ws(\gamma_1)$ и $\Wu(\gamma_2)$.
\end{lem}
Нам понадобятся две элементарные технические леммы, доказательства
которых мы опустим.

Рассмотрим на плоскости $\RR^2$ поток $\varphi(t, x)$, порожденный
линейной автономной системой вида
\begin{equation}\notag
\dot{x} = \left(
            \begin{array}{cc}
              a & -b \\
              b & a \\
            \end{array}
          \right)x, \quad x \in \RR^2, \quad \mbox{где $a>0$ и $b \ne
          0$}.
\end{equation}
Для точки $x \in \RR^2 \setminus \{0\}$ будем обозначать через
$\arg(x)$ точку $ \frac{x}{|x|} \in S^1$.
\begin{lem}\label{Spiral}
Для любых $\varepsilon, L > 0$ существуют такие положительные числа
$T = T(\varepsilon, L)$ и $d_0 = d_0(\varepsilon, L)$, что если
$$
 d< d_0, \quad
 x_0, x_1 \in \RR^2, \quad |x_0| \geq d, \quad
 h(t) \in \Rep(Ld),
$$
\begin{equation}\label{Lip4}
|\varphi(t, x_0) - \varphi(h(t), x_1)| < Ld \quad \mbox{при} \quad t
\in [0, T],
\end{equation}
то выполнено неравенство $ |\arg(x_1) - \arg(x_0)| < \varepsilon. $
\end{lem}

Одномерным (и более простым) аналогом леммы \ref{Spiral} является
следующее утверждение, относящееся к дифференциальному уравнению $
\dot{x} = ax$ на прямой и к его потоку $\varphi(t, x) = x e^{at}$.
\begin{lem}\label{Dim1Spiral}
Для любых $\varepsilon, L > 0$ существуют такие положительные числа
$T = T(\varepsilon, L)$ и $d_0 = d_0 (\varepsilon, L)$, что если
$$
 d < d_0, \quad
 x_0, x_1 \in \RR, \quad |x_0|>d, \quad
 h(t) \in \Rep(Ld),
$$
и выполнено неравенство \sref{Lip4}, то выполнено неравенство
\begin{equation}\notag
\frac{|x_1 - x_0|}{|x_0|} < \varepsilon.
\end{equation}
\end{lem}

Приступим к доказательству леммы \ref{Rest}. Предположим противное:
пусть $r$ -- точка нетрансверсального пересечения $\Wu(q)$ и
$\Ws(p)$. В \cite{Pil2} и \cite{Pil3} показано, что в любой
$\Cone$-окрестности поля $X$ найдется такое поле $X'$, что $p$ и $q$
гиперболические точки покоя поля $X'$, $r$ -- точка
нетрансверсального пересечения $\Wu(q)$ и $\Ws(p)$ и поле $X'$
линейно в некоторых окрестностях $N_p$ и $N_q$ точек $p$ и $q$
соответственно. Поскольку $X \in \Int^1(\LipSh)$, то $X'$ можно
выбрать также принадлежащим $\Int^1(\LipSh)$. Для дальнейшей
простоты изложения переобозначим $X'$ через $X$, а поток,
порожденный $X'$, через $\phi$. В дальнейшем в ходе доказательства
мы будем несколько раз подобным образом возмущать поле $X$, оставляя
его в $\Int^1(\LipSh)$ и переобозначая новое поле через $X$, а поток
через $\phi$.

Отождествим окрестности $N_p$ и $N_q$ с пространством $\RR^n$.
Введем в $N_p$ и $N_q$ такие локальные координаты $(y, z)$ и $(\xi,
\eta)$, что $p$ и $q$ -- начала координат в $N_p$ и $N_q$, а матрицы
Якоби в этих координатах имеют вид (возможно для этого придется
возмутить поле $X$): $ DX(p) = \diag(A_p, B_p), $ где $
Re(\lambda_j) < 0$ для собственных чисел $A_p$, $Re(\lambda_j) > 0$
для собственных чисел $B_p$, и $ B_p = \diag (\lambda_1, \dots,
\lambda_{u_1}, D_1, \dots, D_{u_2}), $ где $\lambda_1, \dots,
\lambda_{u_1} \in \RR$  и $D_j$ -- матрицы $2 \times 2$, имеющие вид
$$
D_j = \left(
        \begin{array}{cc}
          a_j & -b_j \\
          b_j & a_j \\
        \end{array}
      \right), \quad \mbox{где $a_j > 0$ и $b_j \ne 0$, $j \in \{1, \dots, u_2\}$}.
$$

Аналогично, $DX(q) = \diag(A_q, B_q)$, где $Re(\lambda_j) > 0$ для
собственных чисел $A_q$; $ Re(\lambda_j) < 0$ для собственных чисел
$B_q$ и $ B_q = \diag (\mu_1, \dots, \mu_{s_1}, \tilde{D}_1, \dots,
\tilde{D}_{s_2}), $ где $\mu_1, \dots, \mu_{s_1} \in \RR$ и
$\tilde{D}_j$ -- матрицы $2 \times 2$, имеющие вид
$$
\tilde{D}_j = \left(
        \begin{array}{cc}
          \tilde{a}_j & -\tilde{b}_j \\
          \tilde{b}_j & \tilde{a}_j \\
        \end{array}
      \right), \quad \mbox{где $\tilde{a}_j < 0$ и $\tilde{b}_j \ne 0$, $j \in \{1, \dots, s_2\}$}.
$$
Таким образом, в этих окрестностях $N_p$ и $N_q$ (а в дальнейшем мы
будем считать, что все рассмотрение ведется в объединении $N_p$ и
$N_q$ и малой окрестности траектории точки $r$) верно следующее:
$$
\Ws(p) = \{ z = 0 \}, \; \Wu(p) = \{ y = 0 \},
 \;
\Ws(q) = \{ \eta = 0 \},
 \;
\Wu(q) = \{ \xi = 0 \}.
$$

Введем обозначения: $ S_p = \Ws(p)$, $U_p = \Wu(p)$, $S_q = \Ws(q)$,
$U_q = \Ws(q)$. Пусть при этом $S_q = S_{q}^{(1)} \oplus \dots
\oplus S_{q}^{(l)}$, где $l = s_1 + s_2$ и $S_{q}^{(1)}, \dots,
S_{q}^{(l)}$ -- одномерные или двумерные инвариантные относительно
$DX(q)$ подпространства. Аналогично, $ U_p = U_p^{(1)} \oplus \dots
\oplus U_p^{(m)}$, где $m = u_1 + u_2$ и $U_p^{(1)}, \dots,
U_p^{(m)}$~-- одномерные или двумерные инвариантные относительно
$DX(p)$ подпространства.

Обозначим для $j = 1, \dots, l$ через $\Pi_q^{(j)}$ проекторы на
$S_q^{(j)}$ параллельно $U_q \oplus S_{q}^{(1)} \oplus \dots \oplus
S_{q}^{(j-1)} \oplus S_{q}^{(j + 1)} \oplus \dots \oplus
S_{q}^{(l)}$. При этом будут выполнены равенства
$$
\Pi_q^{(j)}S_q^{(j)} = S_q^{(j)};  \quad \Pi_q^{(j)} \Pi_q^{(k)} =
0, \quad \mbox{где} \quad j, k = 1, \dots, l, \quad j \ne k.
$$
Обозначим через $\Pi_q$ проектор на $S_q$ параллельно $U_q$: $
{\Pi_q = \Pi_q^{(1)} + \dots + \Pi_q^{(l)}}$.

Пусть $\Pi_p^{(1)}, \dots, \Pi_p^{(m)}$ -- проекторы на $U_p^{(1)},
\dots, U_p^{(m)}$ соответственно. Верно следующее:
$$
\Pi_p^{(i)}U_p^{(i)} = U_p^{(i)}, \quad \Pi_p^{(i)} \Pi_p^{(k)} = 0,
\quad \mbox{где}\quad  i, k = 1, \dots, m, \quad i \ne k.
$$
Обозначим через $\Pi_p$ проектор на $U_p$ параллельно $S_p$: $
{\Pi_p = \Pi_p^{(1)} + \dots + \Pi_p^{(m)}} $.

Выберем на траектории $\phi(t, r)$ точки $a_p \in N_p$, $a_q \in
N_q$ таким образом, чтобы для любого $t>0$ выполнялись включения $
\phi(t, a_p) \in N_p$, ${\phi(-t, a_q) \in N_q}$. При этом для
некоторого $\tau > 0$ будет выполнено равенство $ a_p = \phi(\tau,
a_q). $ Пусть $v_p = X(a_p)$, $v_q = X(a_q)$. Ясно, что $v_p \in
S_p$, $v_q \in U_q$.

Пусть $\tilde{\Sigma}_p$ -- гиперплоскость в $S_p$, ортогональная
$v_p$, а $\Sigma_p$ -- аффинное $(n-1)$-мерное подпространство $
\Sigma_p = a_p + \tilde{\Sigma}_p + U_p. $ Аналогично,
$\tilde{\Sigma}_q$ -- гиперплоскость в $U_q$, ортогональная $v_q$, а
$\Sigma_q$ -- аффинное $(n-1)$-мерное подпространство $ \Sigma_q =
a_q + \tilde{\Sigma}_q + S_q. $ Ясно, что $\Sigma_p$ и $\Sigma_q$ не
имеют контакта с полем $X$ в малых окрестностях точек $a_p$ и $a_q$.
Обозначим через $ K: \Sigma_q \to \Sigma_p $ соответствующее
отображение Пуанкаре.

Возмущением поля $X$ и выбором координат около куска траектории
$\phi([o, \tau], a_q)$ можно добиться того, чтобы
\begin{enumerate}
\item [--] выполнялось
равенство $K(x) = \phi(\tau, x)$ для $x \in \Sigma_q$, близких к
$a_q$;
\item [--] отображение $K$ было линейным (при естественном
отождествлении $\Sigma_q$ с $\tilde{\Sigma}_q \oplus S_q$ и
$\Sigma_p$ с $\tilde{\Sigma}_p \oplus U_p$).
\end{enumerate}
Ясно, что в этом случае
\begin{equation}\label{Lip*17.1}
T_{a_p}\Wu(q) = K \tilde{\Sigma}_q + v_p \quad \mbox{и} \quad
T_{a_p}\Ws(p) = \tilde{\Sigma}_p + v_p.
\end{equation}
Нетрансверсальность пересечения $\Wu(q)$ и $\Ws(p)$ в точке $a_p$
означает, что $T_{a_p}\Wu(q) + T_{a_p}\Ws(p) \ne \RR^n$. Ввиду
соотношений \sref{Lip*17.1} это означает, что $ {v_p +
\tilde{\Sigma}_p + K \tilde{\Sigma}_q \ne \RR^n}$. Отсюда, ввиду
равенства $v_p + \tilde{\Sigma}_p = S_p$, следует, что
\begin{equation}\label{Lip*17.3}
\Pi_p K \tilde{\Sigma}_q \ne U_p.
\end{equation}
Из соотношения \sref{Lip*17.3} следует, что при некотором $i \in
\{1, \dots, m\}$ выполнено соотношение $\Pi_p^{(i)}K
\tilde{\Sigma}_q \ne U_p^{(i)}$. Мы рассмотрим наиболее сложный
случай, в котором $\dim U_p^{(i)} = 2$ и $\dim \Pi_p^{(i)}K
\tilde{\Sigma}_q = 1$. Обозначим через $e_p \in U_p^{(i)}$ единичный
вектор, перпендикулярный $\Pi_p^{(i)}K \tilde{\Sigma}_q$. Обозначим
через $\Pi_p^{e_p}$ проекцию на прямую, проходящую через вектор
$e_p$, параллельно $\Pi_p^{(i)}K \tilde{\Sigma}_q$.  Из выбора $e_p$
следует равенство
\begin{equation}\label{Text3.5}
\Pi_p^{e_p}K \tilde{\Sigma}_q = \{0\}.
\end{equation}
Любой вектор $x \in \Sigma_q$ представим в виде $x = \Pi_qx +y$, где
$y \in \tilde{\Sigma}_q$. Отсюда следует, что $\Pi_p^{e_p} Kx =
\Pi_p^{e_p} K (\Pi_qx +y) = \Pi_p^{e_p}K\Pi_q x + \Pi_p^{e_p}Ky$. Из
равенства \sref{Text3.5} следует, что
\begin{equation}\label{Add12.0.5}
\Pi_p^{e_p} Kx = \Pi_p^{e_p}K\Pi_q x \quad \mbox{для $x \in
\Sigma_q$}.
\end{equation}
Ввиду того, что $\Sigma_p = K\Sigma_q = K (\tilde{\Sigma}_q + S_q)$,
выполнено соотношение
\begin{equation}\label{Add12.1}
\Pi_p^{e_p} K S_q \ne \{0\}.
\end{equation}
В дальнейшем мы будем ссылаться лишь на соотношения \sref{Add12.0.5}
и \sref{Add12.1}; разбор других случаев отличается лишь выбором
вектора $e_p$.

Отождествим прямую, проходящую через $e_p$, с вещественной прямой и
будем считать, что $\Pi_p^{e_p} e_p = 1$. Выберем такой единичный
вектор $e_q \in S_q$, чтобы для всех $j \in \{1, \dots, l\}$
выполнялись соотношения:
\begin{enumerate}
\item $\Pi_q^{(j)} e_q = 0$, если $\Pi_p^{e_p} K S_q^{(j)} = \{0\}$.
\item $\Pi_p^{e_p} K \Pi_q^{(j)} e_q < 0$, если $\Pi_p^{e_p} K S_q^{(j)} \ne
\{0\}$, при этом если $\dim S_q^{(j)} = 2$, то $e_q$ выберем таким
образом, чтобы $\Pi_q^{(j)} e_q \perp \Ker \Pi_p^{e_p} K
\Pi_q^{(j)}$.
\end{enumerate}

Из \sref{Add12.1} следует, что существует $e_q \ne 0$. Для всякого
$d > 0$ рассмотрим псевдотраекторию $g(t)$ следующего вида:
$$
g(t) =
\begin{cases}
\phi(t, a_q +de_q), & t < 0,\\
\phi(t, a_q),& 0 \leq t < \tau,\\
\phi(t, a_p +de_p),& t \geq \tau.
\end{cases}
$$
Ясно, что существует такая константа $C_1 \geq 1$, зависящая лишь от
потока $\phi$ и не зависящая от выбора $d, e_p, e_q$, что $g(t)$
будет $C_1 d$-псевдотраекторией потока $\phi$.

Предположим, что поле $X$ обладает липшицевым свойством отслеживания
с константами $L_0$ и $D_0$. Пусть, согласно нашему предположению,
псевдотраектория $g(t)$ при $D_0/C_1>d>0$ отслеживается траекторией
точки $w_q$ с репараметризацией $h(t) \in \Rep(L_0 C_1 d)$. При этом
выполнено неравенство
\begin{equation}\label{Lip38}
\dist(\phi(h(t), w_q), g(t)) \leq L_0 C_1 d, \quad t \in \RR.
\end{equation}

Ясно, что траектория точки $\omega_q$ пересекает $\Sigma_q$.
Обозначим точку пересечения через $\omega'_q$. Из неравенства
\sref{Lip38} следует, что найдется такая константа $C_2$, не
зависящая от $d$, что $\omega'_q = \phi(H, \omega_q)$ при некотором
$|H| < C_2d$. Траектория точки $\omega'_q$ будет отслеживать
псевдотраекторию $g(t)$ с репараметризацией класса $\Rep(L' C_1 d)$,
где $L' = (L_0C_1+C_2)/C_1$. Для простоты дальнейшего изложения
переобозначим $\omega'_q$ через $\omega_q$ и $L'$ через $L_0$.

Определим $w_p \in \Sigma_p$ следующим образом: $ w_p = K w_q =
\phi(\tau, w_q).$ Из включения $g(\tau) \in \Sigma_p$ и из
неравенства \sref{Lip38} при $t = \tau$ следует, что $
\dist(\phi(h(\tau), w_q), \Sigma_p) \leq L_0 C_1 d$. Ясно, что в
этом случае существует такая константа $C_3$, не зависящая от $d$,
что $ w_p = \phi(h(\tau)+H, w_q)$ при некотором $|H| < C_3d$.

Пусть $ \phi_q^{(j)}(t, x) = \Pi_q^{(j)}\phi(t, \Pi_q^{(j)}x)$ --
проекция потока $\phi$ на подпространство $S_q^{(j)}$. Ясно, что
$\phi_q^{(j)}$ задается линейным векторным полем до тех пор, пока
траектория не покидает окрестность $N_q$. Аналогично вводится $
\phi_p(t, x) = \Pi_p^{(i)}\phi(t, \Pi_p^{(i)}x)$.

Возьмем $\varepsilon = \pi/4$ и $L = C_1L_0 + 1$. Применим к этим
числам и потокам $\phi_p(t, x)$ и $\phi_q^{(j)}(-t, x)$ при $j \in
\{1, \dots, l\}$ леммы \ref{Spiral} и \ref{Dim1Spiral}. Найдем такие
числа $T = T(\varepsilon, L)$ \label{TSelect} и $d_0 =
d_0(\varepsilon, L)$, что утверждение лемм \ref{Spiral} и
\ref{Dim1Spiral} будет выполнено для данных $T$ и $d_0$ для всех
рассматриваемых систем.

Выберем такое $d_1 \in \RR$, что $d_0 > d_1>0$ и для любого $d \leq
d_1$ выполнены неравенство \sref{Lip38} и включения
\begin{equation}\notag
B(L_0C_1d, \phi(t, a_q + d e_q)) \subset N_q \quad \mbox{при} \; 0
\geq t \geq -2T
\end{equation}
и
\begin{equation}\notag
B(L_0C_1d, \phi(t, a_p + d e_p)) \subset N_p \quad \mbox{при} \; 0
\leq t \leq 2T,
\end{equation}
где $B(a, x)$ -- шар радиуса $a$ с центром в точке $x$.

Отсюда и из неравенства \sref{Lip38} следует, что при $0 \leq t \leq
T$ выполнены включения
$$
\phi(h(-t), \omega_q) \in N_q \quad \mbox{и} \quad \phi(h(\tau + t)
-h(\tau), \omega_p) \in N_p.
$$
Таким образом, интересующие нас куски траектории и псевдотраектории
лежат в $N_p$ и $N_q$.

Из неравенств \sref{Lip38} и из определения $g(t)$ следует, что
$$
|\phi_q^{(j)}(h(t), w_q) - \phi_q^{(j)}(t, a_q + d e_q)| \leq L_0
C_1 d \; \mbox{при} \; -T \leq t \leq 0, j \in \{1, \dots, l\}.
$$

Покажем, что $\Pi_p^{e_p}K \Pi_q^{(j)}d e_q$ и
$\Pi_p^{e_p}K\Pi_q^{(j)} \omega_q$ одного знака. Рассмотрим более
сложный случай, когда $\dim S_q^{(j)} = 2$. Применим лемму
\ref{Spiral} к потоку $\phi_q^{(j)}(-t, x)$ с $x_0^{(j)} =
\Pi_q^{(j)}(d e_q)$ и $x_1^{(j)} = \Pi_q^{(j)} \omega_q$. Мы видим,
что $ |\arg(x_1^{(j)}) - \arg(x_0^{(j)})| < \varepsilon =
\frac{\pi}{4}$. Из выбора $e_q$ следует, что $e_q$ и $\omega_q$
лежат в одной полуплоскости относительно $\Ker \Pi_p^{e_p} K
\Pi_q^{(j)}$. Отсюда следует, что $\Pi_p^{e_p}K \Pi_q^{(j)} e_q$ и
$\Pi_p^{e_p}K\Pi_q^{(j)} \omega_q$ одного знака, т.е.
\begin{equation}\label{Text19.5}
\Pi_p^{e_p}K\Pi_q^{(j)} \omega_q < 0.
\end{equation}
Аналогично показывается, что $\Pi_p^{e_p}\omega_p$ и
$\Pi_p^{e_p}de_p$ одного знака, т.е. ${\Pi_p^{e_p}\omega_p >0}$.
Складывая неравенства \sref{Text19.5} для всех $j \in \{1, \dots,
l\}$, получим неравенство $\Pi_p^{e_p}K \Pi_q \omega_q < 0$. Из
\sref{Add12.0.5} следует, что $\Pi_p^{e_p}K \omega_q < 0$, однако
$\Pi_p^{e_p}K \omega_q = \Pi_p^{e_p} {\omega_p > 0}$. Это
противоречие доказывает лемму \ref{Rest} и теорему~\ref{Lip}.
\end{proof}

\section{Доказательство теоремы \ref{Dim3}}

Для доказательства теоремы \ref{Dim3} нам понадобится две
дополнительные леммы.
\begin{lem}\label{NoSubset}
Пусть $p$ и $q$ -- гиперболические точки покоя векторного поля $X$,
при этом точка $p$ не является стоком. Пусть $ r = \Wu(q) \cap
\Ws(p).$ Предположим, что в некоторой окрестности $V$ точки $r$
выполнено включение
\begin{equation}\label{Sovp1.1}
\Wu(q) \cap V \subset \Ws(p) \cap V.
\end{equation}
Тогда $X \notin \Int^1(\OrSh)$.
\end{lem}
\begin{proof}
Не умаляя общности, можно считать, что $r \in W_{loc}^s(p)$ (где
$W_{loc}^s(p)$ и $W_{loc}^u(p)$ -- соответственно локально
устойчивое и локально неустойчивое многообразие точки $p$).
Рассмотрим произвольную точку~${\alpha \in W_{loc}^u(p)}$. Выберем
такое $\varepsilon > 0$, что
\begin{enumerate}
\item $\dist(\alpha, \Ws_{loc}(p)) > \varepsilon$ и $B(\varepsilon, r) \subset V$;
\item траектория любой точки $x \notin \Wu(q)$ при стремлении времени к $-\infty$
покидала бы $\varepsilon$-окрестность точки $q$.
\end{enumerate}
Для произвольных $\tau_0, \tau_1 > 0$ рассмотрим псевдотраекторию
$g(t)$ следующего вида:
$$
g(t) =
\begin{cases}
\phi(t, r),& \quad t \leq \tau_0, \\
\phi(t - \tau_0 - \tau_1, \alpha),& \quad t > \tau_0.
\end{cases}
$$
Поскольку $\phi(t, r) \to p$ при $t \to \infty$ и $\phi(t, \alpha)
\to p$ при $t \to -\infty$, то для всякого $d > 0$ найдутся такие
$\tau_0$ и $\tau_1$, что $g(t)$ будет являться
{$d$-псевдотраекторией}.

Покажем, что для любой репараметризации $h(t)$ и точки $x \in M$
найдется такое $t \in \RR$, что $ \dist(g(t), \phi(h(t), x)) >
\varepsilon$. Предположим противное, тогда выполнено неравенство
\begin{equation}\label{Sovp3.2}
\dist(g(t), \phi(h(t), x)) \leq \varepsilon, \quad t \in \RR.
\end{equation}
Поскольку $g(t) \to q$ при $t \to -\infty$, то из неравенства
\sref{Sovp3.2} следует, что ${x \in \Wu(q)}$. Подставив $t = 0$ в
\sref{Sovp3.2}, получим неравенство $\dist(r, \phi(h(0), x)) \leq
\varepsilon$. Отсюда и из соотношения \sref{Sovp1.1} следует, что
$\phi(h(0), x) \in W^s_{loc} (p)$, а значит для любого $t>0$
выполнено включение $ \phi(h(t), x) \in W^s_{loc}(p). $ Но тогда,
исходя из выбора $\alpha$, при $t = \tau_0+\tau_1$ не выполнено
неравенство \sref{Sovp3.2}. А отсюда следует, что $X \notin
\Int^1(\OrSh)$.
\end{proof}
\begin{lem}\label{Dim1} Пусть $p, q$ -- гиперболические точки покоя
векторного поля $X \in \Int^1(\OrSh)$ и при этом $\dim \Wu(p) = 1$.
Пусть $r \in \Wu(q) \cap \Ws(p)$. Тогда $r$ -- точка
трансверсального пересечения $\Wu(q)$ и $\Ws(p)$.
\end{lem}
\begin{proof}
Предположим, что $r$ -- точка нетрансверсального пересечения
$\Wu(q)$ и $\Ws(p)$. По аналогии с доказательством леммы \ref{Rest}
можно считать, что
\begin{enumerate}
\item поле $X$ линейно в некоторой окрестности $U$ точки $p$ и при этом $r \in U$;
\item в некоторой окрестности $V$ точки $r$ многообразие $\Wu(q)$ имеет
вид $r+K$, где $K$ -- некоторое линейное подпространство.
\end{enumerate}

Поскольку $\Wu(q)$ и $\Ws(p)$ в окрестности $V$ представляют собой
аффинные пространства и при этом $\dim \Ws(p) = \dim M - 1$, то из
нетрансверсальности пересечения $\Wu(q)$ и $\Ws(p)$ следует, что $
\Wu(q) \cap V \subset \Ws(p) \cap V$. Из этого соотношения и из
леммы \ref{NoSubset} следует, что $X \notin \Int^1(\OrSh)$.
\end{proof}

\begin{proof}[Доказательство теоремы \ref{Dim3}]
Рассмотрим многообразие $M$ размерности $\dim M \leq 3$. По аналогии
с доказательством теоремы \ref{Lip} нам достаточно доказать, что
если $X \in \Int^1(\OrSh)$, $p, q \in \Per(X)$ и $r \in \Wu(q) \cap
\Ws(p)$, то $r$ -- точка трансверсального пересечения $\Ws(p)$ и
$\Wu(q)$. Если $p$ или $q$ является замкнутой траекторией, то данное
утверждение следует из леммы~\ref{OrSP}. Таким образом, можно
считать, что $p$ и $q$ -- точки покоя. Предположим, что $\dim M = 3$
(доказательство для случаев $\dim M = 2$ и  $\dim M = 1$
аналогично). Возможны следующие случаи:
\begin{enumerate}
\item Хотя бы одно из многообразий $\Ws(p)$ и $\Wu(q)$ имеет
размерность~3. Тогда их пересечение является трансверсальным.
\item Хотя бы одно из многообразий $\Ws(p)$ и $\Wu(q)$ имеет
размерность~2. Не умаляя общности, можно считать, что $\dim \Ws(p) =
2$. Тогда из леммы \ref{Dim1} следует, что пересечение $\Ws(p)$ и
$\Wu(q)$ трансверсально.
\item Оба многообразия $\Ws(p)$ и $\Wu(q)$ имеют размерность 1. В
этом случае каждое из них представляет из себя траекторию некоторой
точки, а значит, $\Ws(p) = \Wu(q)$. Из леммы \ref{NoSubset} следует,
что $X \notin \Int^1(\OrSh)$.
\end{enumerate}
Теорема \ref{Dim3} доказана.
\end{proof}

\section{Заключение}

В статье описана структура $\Cone$-внутренности множеств векторных
полей, обладающих липшицевым и ориентированным свойствами
отслеживания.

\end{document}